\theoremstyle{thmstyleone}%
\newtheorem{theorem}{Theorem}
\theoremstyle{thmstyletwo}%
\newtheorem{example}{Example}%
\newtheorem{remark}{Remark}%
\theoremstyle{thmstylethree}%
\begin{document}

\begin{frontmatter}



\title{An infinitesimal generator approach on weak convergence of regulated multi-class matching systems}


\author[inst1]{Bowen Xie\corref{cor1}}

\affiliation[inst1]{organization={Department of Mathematics, 
College of Engineering and Polymer Science, 
The University of Akron},
            city={Akron},
            state={Ohio},
            postcode={44325-4002}, 
            country={US}
            }


\cortext[cor1]{bxie@uakron.edu}

\begin{abstract}

We consider a regulated multi-class instantaneous matching system with reneging, in which each event requires $K \geq 2$ distinct impatient agents who wait in their respective queues. Each agent class is subject to a buffer capacity, allowing for the special case without buffers. 
Due to the instantaneous matching behavior, at any give time, at least one category has an empty queue. 
Under the Markovian assumption, the system dynamics are described by a Markov chain with innovative rate matrices that capture all possible queue configurations across all classes. 
To effectively circumvent the structural challenges introduced by instantaneous matching, we establish a non-trivial yet tractable diffusion approximation under heavy traffic conditions by leveraging the infinitesimal generator in conjunction with appropriate regulation and boundary conditions. 
This asymptotic analysis offers a direct explanation of the dynamics of the regulated coupled heavy-traffic limiting process. 
Furthermore, we demonstrate the connection between the diffusion-scaled limit derived from the generator approach and the one established in the literature. The latter is typically described by a regulated coupled stochastic integral equation.

\end{abstract}



\begin{keyword}
Infinitesimal generators \sep matching queues \sep heavy traffic limits \sep local time processes \sep regulation conditions

\MSC[2020] primary 60K25 \sep 91B68\sep secondary 90B20 \sep 60J60 
\end{keyword}

\end{frontmatter}



\section{Introduction}
\label{sec: Introduction}

Matching etiquette has drawn significant attention in the context of high-efficiency platforms that facilitate multi-party matching. Such platforms arise in various domains, including the production of technological and pharmaceutical products, volunteer coordination in non-profit organizations (cf. \cite{zychlinski2024managing}, \cite{doval2025efficiency}), jury selection in judicial systems, and market making in high-frequency trading (cf. \cite{guo2015dynamics}, \cite{guo2017optimal}). 
These instances share a common structural feature: the need to coordinate and match multiple classes of participants in a timely and efficient manner. This phenomenon can thus be abstracted into a general setting where an event requires the simultaneous participation of multiple types of agents. 
Agents from each distinct class arrive sequentially over time and join their respective queues. To successfully initiate an event, one agent from each class is required, and once matched, the involved agents exit the system immediately. The matching operates under a first-come-first-matched (FCFM) discipline. These agents may exhibit impatience, abandoning the system if they are not matched within their individual patience thresholds. Given the instantaneous nature of matching, it is evident that all queues cannot simultaneously contain a positive number of agents at any given moment, namely, at least one queue must be empty. Such queueing systems are referred to as multi-component matching queues with impatient components (cf. 
\cite{plambeck2006optimal}, \cite{buke2015stabilizing}, \cite{gurvich2015dynamic}, \cite{mairesse2016stability}, \cite{nazari2019reward}, 
\cite{jonckheere2023generalized}, \cite{xie2024multi}).

In the study of queueing theory, various approaches have been developed to analyze diffusion approximations of queueing systems. One particularly compelling method involves the use of infinitesimal generators, especially in the context of Markovian models (cf. \cite{harrison1973heavy}, \cite{harrison1978diffusion}, \cite{martins1996heavy}, \cite{kumar2007integrating}, \cite{pang2007martingale}, \cite{budhiraja2011multiscale}, \cite{xie2024diffusion}). This approach circumvents the need to explicitly construct queue length processes or prove tightness, thereby significantly simplifying the analysis, especially for complex systems, by leveraging the structural properties of the underlying Markov processes.

In this article, we introduce a multi-component matching system with perishable components and restricted buffer capacities. 
To derive its heavy-traffic diffusion approximation, we conduct a direct analysis based on the infinitesimal generator method, highlighting the model’s non-trivial structural intricacies. 
Our main contributions are threefold:
First, the rate matrices are challenges so that they need to precisely capture the dynamics of states of each queue. 
For instance, an arbitrary new arrival may result in one of three outcomes: an instantaneous match if compatible agents from other categories are available, admission to the waiting room if capacity permits, or rejection due to a full queue, which are mainly characterized by a sequence of indicator functions associated with the status of queues. 
Second, the infinitesimal generators of both the discrete-state process and its diffusion limit are non-trivial, particularly in the presence of buffer capacities $b_i\in(0, \infty]$ for each queue $i\in\{1, \cdots, K\}$. 
We assume that the queue lengths are constrained within the interval $[0, b_i]$, where $b_i = \infty$ represents an infinite buffer, namely, no buffer constraint is imposed. 
Naturally, this setting necessitates careful specification of regulation mechanisms and boundary conditions at the buffer limits (cf. \cite{xie2022topics}). Moreover, the generators must appropriately capture mixed boundary behaviors, such as when $s_i < b_i$ for some categories $i$'s and $s_j = b_j$ for other $j$'s. 
For brevity, we provide a more detailed discussion in Section \ref{sec: Asymptotic analysis}. Utilizing the generator approach, one can leverage established results (Theorem 6.1 of Chapter 1 and Theorem 2.11 of Chapter 4 in \cite{ethier2009markov}) within an appropriate function space. 
Our findings further highlight the critical role of regulation and boundary conditions in the diffusion approximation framework.
Third, the connections between the diffusion limiting process associated with the infinitesimal generator and the heavy-traffic limit obtained from \cite{xie2024multi}, which is derived from an integral representation and its corresponding functional central limit theorem (FCLT), are carefully discussed. 
We also exhibit a simple example of double-ended queue when the number of categories $K = 2$ to illustrate the equivalence between these two formulations of the limiting process.

The stochastic matching queue analysis has gained a lot of attention in recent literature \cite{mairesse2020editorial}. 
The double-ended system has been well studied recently, and corresponding control problems have been concerned (cf.
\cite{conolly2002double},  
\cite{liu2015diffusion}, 
\cite{liu2019diffusion},   \cite{liu2021admission},  \cite{lee2021optimal}, etc.). 
In \cite{conolly2002double}, the effect of reneging is studied in the context of double-ended queues, where each demands service from the other to provide a theoretical but brief numerical assessment of operational consequences. 
In \cite{liu2015diffusion}, under a suitable asymptotic regime, they established fluid and diffusion approximations for the queue length process, which are characterized by an ordinary differential equation and time-inhomogeneous asymmetric Ornstein-Uhlenbeck process. They also exhibited the interchangeability of the heavy traffic and steady state limits. 
In \cite{liu2019diffusion}, they studied a double-ended system with two classes of impatient customers and established simple linear asymptotic relationships between the diffusion-scaled queue length process and the diffusion-scaled offered waiting time processes in heavy traffic. 
They also showed that the diffusion-scaled queue length process converges weakly to a diffusion process that admits a unique stationary distribution.
In \cite{lee2021optimal}, they studied a double-ended system having backorders and customer abandonment and determined the optimal (nonstationary) production rate over a finite time horizon to minimize the costs incurred by the system under some cost structure.

The generalized multi-class matching has also been studied in the literature with different formulations through various aspects (cf. \cite{harrison1973assembly}, \cite{plambeck2006optimal}, \cite{gurvich2015dynamic}, etc.). 
In \cite{harrison1973assembly}, Harrison studied a model with an assembly-like behavior to produce a product with several components and developed limit theorems for the appropriately normalized versions of the associated vector waiting time process in heavy traffic. 
The model contains $K\geq2$ independent renewal input processes. 
The server requires one input component of each category $j = 1, \cdots, K$, and once the server has all the required components, it takes a random processing time to finish the product.
\cite{plambeck2006optimal} introduced a model with order queues and component queues and studied a control problem of an assembly-to-order system with a high volume of prospective customers arriving per unit time, where multiple different components are instantaneously assembled into different finished products, and the control problem is developed so that they can maximize the expected infinite-horizon discounted profit by choosing product prices, component production capacities, and a dynamic policy for sequencing customer orders for assembly. 
In \cite{gurvich2015dynamic}, the authors studied a matching system with instantaneous processing, where a system manager could control which matches to execute given multiple options and addressed the problem of minimizing finite-horizon cumulative holding costs. 
They established a multi-dimensional imbalance process to characterize the matching model and devised a myopic discrete-review matching control, which is shown to be asymptotically optimal in heavy traffic. 
More results on matching systems can be found in \cite{green1985queueing}, \cite{adan2009exact}, \cite{adan2018reversibility}, and \cite{fazel2018approximating}. 
Applications to ride-sharing systems are explored in \cite{kashyap1966double} and \cite{ozkan2020dynamic}, while organ transplant systems are studied in \cite{boxma2011new} and \cite{khademi2021asymptotically}, and blood bank operations are examined in \cite{bar2017blood}. Related models in production-inventory systems are discussed in \cite{kaspi1983inventory}, \cite{perry1999perishable}, \cite{xie2024long}, and \cite{lee2021optimal}.

The remainder of the paper is organized as follows.
In Section \ref{sec: basic model}, we introduce the matching system with perishable components and restricted buffer capacities, along with the basic modeling assumptions.
Section \ref{sec: asymptotic framework} presents the asymptotic framework and derives the corresponding infinitesimal generator. 
In Section \ref{sec: Asymptotic analysis}, we employ the generator approach, together with appropriate regulation and boundary conditions, to develop a diffusion approximation under heavy traffic. We also discuss the connection between the limiting process derived via direct analysis and that obtained through the functional central limit theorem (FCLT).

\textbf{Notation.} 
Let $\mathbb{N}$ represent the set of positive integers. 
Let $\mathbb{R}$ denote the one-dimensional Euclidean space. For $0< T\leq \infty$, let $D[0, T]$ denote the Skorokhod space of functions with right continuous and left limits (RCLL). 
The uniform norm on $[0, T]$ for a stochastic process $X$ in $D[0, T]$ is defined by $\|X\|_T = \sup_{t\in[0, T]}|X(t)|$. 
Let $B(E)$ denote the Banach space of all bounded measurable functions on $E$, and the uniform norm is given by $\|f\|_{B(E)} = \sup_{x\in E}|f(x)|$ for $f\in B(E)$. 
For any operator $A$ defined in a Banach space, we let $\mathrm{D}(A)$ denote its domain. 
For $n\in\mathbb{N}$, let $L_n$ and $L$ be Banach spaces (with norm denoted by $\|\cdot\|$) and $\pi_{n}:L\to L_n$ denote a bounded linear transformation. We write a notion of convergence as
\begin{equation}
    f_n\to f, 
    \label{eq: notion of convergence}
\end{equation}
if $f_n\in L_n$ for each $n\geq 1$, $f\in L$, and $\lim_{ n\to\infty} \|f_n - \pi_n f\| = 0$. 
Throughout, we use $\Rightarrow$ to denote weak convergence in $D[0, T]$. 
For any real number $a$, $a^+ = \max\{a, 0\}$ and $a^- = \max\{-a, 0\}$. For any two real numbers $a$ and $b$, $a\wedge b = \min\{a, b\}$ and $a\vee b = \max\{a, b\}$.

\section{Basic Model}
\label{sec: basic model}

We consider the multi-component matching queue with perishable components as introduced in \cite{xie2024multi}. In this setting, we further impose some fixed bandwidth capacity $b_i\in(0, \infty]$ for each category $i\in\{1, \cdots, K\}$ and $K\geq 2$. Upon arrival, a component checks the capacity of its corresponding queue, and if the queue is full, the component is discarded  from the system. 
If $b_i = \infty$, the capacity of the $i$th queue admits no restriction. 
Figure \ref{fig: Schematic diagram of matching operation} exhibits the sample system status at some time instant, where the third queue for green triangle items is empty and the red solid, blue dotted, and green dashed lines are the buffer capacities for respective queues. 
If a red square component arrives in the system and the red queue is at full capacity, the component will be discarded as in the schematic diagram.

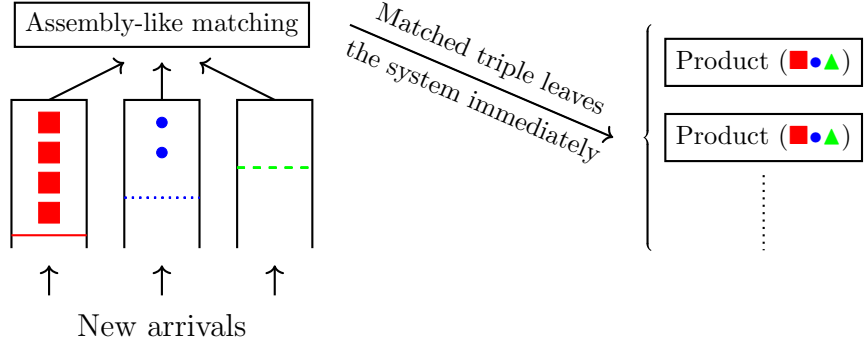
\begin{figure}[h!]
    \centering
    \begin{tikzpicture}[circ/.style={shape=circle, fill, inner sep=2pt, draw, node contents=}]
        \node at (0,2) [rectangle,draw, thick] (product) {\footnotesize Assembly-like matching};
        
        \draw[->, thick] (-1.5, 1) -- (-0.5, 1.5);
        \draw[thick] (-2, 1) rectangle (-1, -1);
        \node at (-1.5, 0.7) [red, rectangle, draw, fill]{}; 
        \node at (-1.5, 0.3) [red, rectangle, draw, fill]{};
        \node at (-1.5, -0.1) [red, rectangle, draw, fill]{};
        \node at (-1.5, -0.5) [red, rectangle, draw, fill]{};
        \draw[thick, red] (-2, -0.8) rectangle (-1, -0.8); 
        
        \draw[->, thick] (0, 1) -- (0, 1.5);
        \draw[thick] (-0.5, 1) rectangle (0.5, -1);
        \node at (0, 0.7) [blue, circ, draw, fill, scale=0.7];
        \node at (0, 0.3) [blue, circ, draw, fill, scale=0.7];
        \draw[thick, blue, dotted] (-0.5, -0.3) rectangle (0.5, -0.3); 

        \draw[->, thick] (1.5, 1) -- (0.5, 1.5);
        \draw[thick] (1, 1) rectangle (2, -1);
        \draw[thick, dashed, green] (1, 0.1) rectangle (2, 0.1);

        \draw [->, thick] (2.5, 2) -- (6, 0.5);
        \draw node (nodeA) at (2.5, 2) {};
        \draw node (nodeB) at (6, 0.5) {};
        \draw (nodeA) -- (nodeB) node [midway, above, sloped] (TextNode) {\footnotesize Matched triple leaves};
        \draw (nodeA) -- (nodeB) node [midway, below, sloped] (TextNode) {\footnotesize the system immediately};
        
        \draw [decorate, decoration = {calligraphic brace}, thick] (6.5,-1) --  (6.5,2);
        \node at (8,1.5) [rectangle,draw, thick] (product) {\footnotesize Product ($\textcolor{red}{\blacksquare} \textcolor{blue}{\bullet} \textcolor{green}{\blacktriangle}$)};
        \node at (8,0.5) [rectangle,draw, thick] (product) {\footnotesize Product ($\textcolor{red}{\blacksquare} \textcolor{blue}{\bullet} \textcolor{green}{\blacktriangle}$)};
        \draw [dotted, thick] (8, 0) -- (8, -1);

        \draw[white, ultra thick]  (-2.2, -1) -- (2.2, -1);
        
        \draw[->, thick] (-1.5, -1.6) -- (-1.5, -1.2);
        \draw[->, thick] (0, -1.6) -- (0, -1.2);
        \draw[->, thick] (1.5, -1.6) -- (1.5, -1.2);
        \draw node at (0, -2) {New arrivals};
    \end{tikzpicture}
    \caption{Schematic diagram of a regulated matching operation for a product made of components from three distinct categories $\textcolor{red}{\blacksquare}$, $\textcolor{blue}{\bullet}$, $\textcolor{green}{\blacktriangle}$. }
    \label{fig: Schematic diagram of matching operation}
\end{figure}

To establish such a model, let $Q_i(t)$ denote the queue length of the $i$th component at time $t>0$, $A_i(t)$ represent the number of arrivals of category $i$ by time $t$, and $G_i(t)$ capture the number of abandoned component of category $i$ by time $t$. It is straightforward to establish the following queue length process for the $i$th component: 
\begin{equation}\label{eq: basic queue length}
    Q_i(t) = Q_i(0) + \int_0^t \mathbbm{1}_{[Q_i(s) < b_i]}dA_i(s) - G_i(t) - R(t), 
\end{equation}
where $Q_i(0)$ denotes the initial components of category $i$, $R(t)$ represent the total number of matches by time $t$ such that
\begin{equation}\label{eq: basic matching completion}
    R(t) = \min_{j\in[0, K]}
    \left\{
    Q_j(0) + \int_0^t \mathbbm{1}_{[Q_j(s) < b_j]}dA_j(s) - G_j(t)
    \right\}. 
\end{equation}
Note that the integration term counts the number of unblocked components by time $t$, which can be decomposed into two terms: the number of arrivals and the number of blocked components. 
We further define the number of blocked components by time $t$, $L_i(t)$, such that
\[
L_i(t) = \int_0^t \mathbbm{1}_{[Q_i(s) \geq b_i]} dA_i(s). 
\]
Therefore, \eqref{eq: basic queue length} can be rewritten as
\begin{equation}\label{eq: basic queue length with decomposed arrivals}
    Q_i(t) = Q_i(0) + A_i(t) - L_i(t) - G_i(t) - R(t).  
\end{equation}
Our objective is to study the heavy traffic behavior of the system when all the components arrive frequently. It is natural to perform the direct analysis of such a system under Markovian assumptions.

\section{Asymptotic Framework}
\label{sec: asymptotic framework}

To carry out the asymptotic analysis, we consider a sequence of independent systems parameterized by $n > 0$ such that the arrival rates get large without bounds as $n\to\infty$. 
Let the queue length process $Q^n(\cdot) = (Q_1^n(\cdot), \cdots, Q_K^n(\cdot))^\intercal$ be the state process. For each component $i\in \{1, \cdots, K\}$, let $A_i^n(\cdot)$ and $G_i^n(\cdot)$ be two independent processes representing the number of arrivals and abandonments of category $i$ in the $n$th system. 
We assume that $A_i^n(\cdot)$ follows a Poisson process in $D([0, \infty), \mathbb{R})$ with arrival rate $\lambda_i^n > 0$, and $\{A_k^n\}_{1\leq k\leq K}$ are all independent with each other. 
Moreover, we assume $\lambda_{i}^{n}\to\infty$ as $n\to\infty$ for each $i$.
We also assume that the abandonment processes follow independent Poisson processes with respective parameter $\delta_i^n > 0$ such that it is constructed by
\begin{equation}
    G_{i}^{n}(t) := N_i\left(\delta_{i}^{n}\int_0^t Q_{i}^{n}(s)ds\right), 
    \label{abandonment process}
\end{equation}
where $\delta_{i}^{n}>0$ is a constant and $N_i$'s are independent unit rate Poisson processes. 
We assume $\lim_{n\to\infty}\delta_i^n = \delta_i$, where $\delta_i>0$ is a real number. 
More precisely, one can think of the patience time of a component as independent of its arrival time as well as the arrival times and patience times of those components who arrived earlier, and they are also independent of everything else in the system. 
Additionally, let $L_i^n(\cdot) = \int_0^\cdot \mathbbm{1}_{[Q_i^n(s) \geq b_i^n]}dA_i^n(s)$ denote the number of blocked components of category $i$ in the $n$th system. 

We describe other basic assumptions and exhibit the heavy traffic assumption as follows. 

\textbf{Assumption 1} (Buffer conditions). 
For each $i\in\{1, \cdots, K\}$, we assume $\lim_{n\to\infty} b_i^n / \sqrt{n} = b_i\in(0, \infty]$. 

\textbf{Assumption 2} (Heavy-traffic condition). 
For each $i\in \{1, \cdots, K\}$, there exists a constant $\lambda_0 > 0$ such that 
\begin{equation}
    \lim_{n\to\infty} \frac{\lambda_i^n - \lambda_0 n}{\sqrt{n}} = \beta_i, 
\end{equation}
where $\beta_i$ is a real number. 

To address the system behavior in heavy traffic, we further introduce the following centered and scaled quantities: 
\begin{equation}
\begin{aligned}
    \hat{Q}_i^n(t) &:= \frac{Q_i^n(t)}{\sqrt{n}}, \quad \hat{A}_i^n(t) := \frac{A_i^n(t) - \lambda_i^nt}{\sqrt{n}}, \\
    \hat{G}_i^n(t) &:= \frac{G_i^n(t)}{\sqrt{n}}, \quad \hat{R}^n(t) := \frac{R^n(t) - \lambda_0 nt}{\sqrt{n}},  
\end{aligned}
\label{eq: diffusion scales}
\end{equation}
for all $t\geq0$ and $i\in\{1, \cdots, K\}$. 
This, together with \eqref{eq: basic queue length with decomposed arrivals}, implies the alternative process-level diffusion-scaled queue length process 
\begin{equation}
    \hat{Q}_i^n(t) = \hat{Q}_i^n(0) + \hat{A}_i^n(t) + \frac{\lambda_i^n - \lambda_0 n}{\sqrt{n}} t - \hat{L}_i^n(t) -  \hat{G}_i^n(t) - \hat{R}^n(t), 
    \label{eq: diffusion scaled queue length}
\end{equation}
where 
\begin{equation*}
    \hat{R}^n(t) = \min_{1\leq j\leq K} \{\hat{Q}_j^n(0) + \hat{A}_j^n(t) + \frac{\lambda_j^n - \lambda_0 n}{\sqrt{n}} t - \hat{L}_i^n(t) -  \hat{G}_i^n(t) \}. 
\end{equation*}

Let $(\hat{Q}_1^n, \cdots, \hat{Q}_K^n) \geq 0$ denote the diffusion-scaled number of components in the queue at time $t\geq 0$. 
We define the state space as 
\begin{equation}
    E^n := \{(s_1, s_2, \cdots, s_K)\in E_{(1)}^n\times E_{(2)}^n\cdots\times E_{(K)}^n: \prod_{j=1}^K s_j = 0 \}, 
    \label{eq: state space}
\end{equation}
where $E_{(j)}^n := \{0, 1/\sqrt{n}, \cdots, b_j^n/\sqrt{n}\}$ for $j = 1, \cdots, K$, and $s_j$'s denote the scaled queue length of the $j$th queue. 

Under the assumptions above, $(\hat{Q}_1^n, \cdots, \hat{Q}_K^n)$ is a Markov chain on $\mathbb{R}_+^K$ with rate matrix $M$ in twofold. 
First, if $s_j < b_j^n/\sqrt{n}$ for all $j\in\{1, \cdots, K\}$, we have 
\begin{equation}\label{eq:rate matrix 1}
\begin{aligned}
    &\quad M((s_1, \cdots, s_K), (s_1', \cdots, s_K')) \\
    &=
    \begin{cases}
        \lambda_i^n & \text{ if } s_1' = s_1 , \cdots, s_i' = s_i+ \frac{1}{\sqrt{n}}, \cdots, s_K' = s_K, \prod_{j\neq i} s_j = 0, \\
        \lambda_i^n & \text{ if } s_1' = s_1 - \frac{1}{\sqrt{n}}, \cdots, s_i' = s_i, \cdots, s_K' = s_K - \frac{1}{\sqrt{n}}, \prod_{j\neq i} s_j \neq 0, \\
        \delta_i^n\sqrt{n} s_i  & \text{ if } s_1' = s_1, \cdots, s_i' = s_i - \frac{1}{\sqrt{n}}, \cdots, s_K' = s_K, \\
        0 &\text{ otherwise}, 
    \end{cases}
\end{aligned}
\end{equation}
where $(s_1, \cdots, s_K), (s_1', \cdots, s_K')\in E^n$ and $i\in\{1, \cdots, K\}$. 
Second, if $s_i = b_i^n/\sqrt{n}$ for some $i\in\{1, \cdots, K\}$ and $\prod_{j\neq i} s_j = 0$, we have
\begin{equation}\label{eq:rate matrix 2}
\begin{aligned}
    &\quad M((s_1, \cdots, s_K), (s_1', \cdots, s_K')) \\
    &=
    \begin{cases}
        \lambda_j^n & \text{ if } s_1' = s_1 , \cdots, s_j' = s_j + \frac{1}{\sqrt{n}}, \cdots, s_K' = s_K, \prod_{k\neq j,i} s_k = 0, 
        s_j < \frac{b_j^n}{\sqrt{n}}\\
        \lambda_j^n & \text{ if } s_1' = s_1 , \cdots, s_j' = s_j, \cdots, s_K' = s_K, \prod_{k\neq j,i} s_k = 0, 
        s_j = \frac{b_j^n}{\sqrt{n}}\\
        \lambda_j^n & \text{ if } s_1' = s_1 - \frac{1}{\sqrt{n}}, \cdots, s_i' = s_i, \cdots, s_K' = s_K - \frac{1}{\sqrt{n}}, \prod_{k\neq j,i} s_k \neq 0,\\
        \delta_j^n\sqrt{n} s_j & \text{ if } s_1' = s_1, \cdots, s_j' = s_j - \frac{1}{\sqrt{n}}, \cdots, s_K' = s_K, \\
        0 &\text{ otherwise}, 
    \end{cases}
\end{aligned}
\end{equation}
for $j\neq i$. 
Observe that the states $s_i$'s represent the scaled states corresponding to the state space $E^n$. For notational simplicity, we use $s_i$ in place of $\hat{s}_i$ to denote the scaled quantities, with a slight abuse of notation. 
Therefore, a factor of $\sqrt{n}$ appears in the abandonment terms. 

The structure of the rate matrices is not as intricate as it may initially appear, as they primarily reflect the status of each queue. For instance, in \eqref{eq:rate matrix 1}, when a new arrival occurs at the $i$th queue, the state of the other queues must also be taken into account. Specifically, if the $i$th queue is the only empty queue, the new arrival immediately triggers a match. In contrast, if one or more of the other queues are empty and the $i$th queue is not, then the arriving item must wait in its respective queue until a suitable match becomes available in the future. 
In \eqref{eq:rate matrix 2}, if a queue $i$ reaches its buffer limit, any incoming component of category $i$ is rejected due to full capacity. For other categories, an arriving component may either be matched instantaneously or rejected, depending on the state of the corresponding queues.

\section{Asymptotic analysis}
\label{sec: Asymptotic analysis}

This section is devoted to asymptotic analysis by characterizing the weak convergence of diffusion-scaled queue length in $D^k[0, T]$ as $n\to\infty$. 
While one might consider a process-level approach similar to that in \cite{xie2024multi}, the joint dependence among $\hat{L}_i^n$ and $\hat{R}^n$, and the scaled queue length $\hat{Q}_i^n$ prevents a direct extension. 
Additionally, the integral representation admits a non-expansive map with Lipschitz constant one, which further complicates this route. 
Instead, under the Markovian assumptions, we adopt the infinitesimal generator method to establish weak convergence (cf. \cite{kumar2007integrating}, \cite{pang2007martingale}, \cite{budhiraja2011multiscale}, \cite{xie2024diffusion}). We begin by presenting the main result in Theorem \ref{thm: weak convergence}.

\begin{theorem}\label{thm: weak convergence}
    Assume that Assumptions 1-2 hold. Consider the diffusion-scaled queue length process $(\hat{Q}^n)$ with rate matrix \eqref{eq:rate matrix 1} and \eqref{eq:rate matrix 2}. If $\hat{Q}^n(0) \Rightarrow X(0)$, then $\hat{Q}^n(\cdot) \Rightarrow X(\cdot)$ as $n\to\infty$, where the diffusion limit $X$ satisfies
    \begin{equation}\label{eq: limiting process}
        X(t) = X(0) + \int_0^t (\beta - \delta X(s)) \,ds + \int_0^t \Sigma \,dW_s  - U_t, 
    \end{equation}
    for all $t\geq 0$, where $\beta = (\beta_1, \cdots, \beta_K)^\intercal$, $\delta = (\delta_1, \cdots, \delta_K)^\intercal$, $W_t  = (W_1(t), \cdots, W_K(t))^\intercal$ are $K$ independent standard Brownian motions, $\Sigma\in\mathbb{R}^{K\times K}$ is obtained by a decomposition $\Sigma\Sigma^\intercal = a$ and $a\in\mathbb{R}^{K\times K}$ is a diffusion coefficient matrix of the infinitesimal generator of \eqref{eq: limiting process} with elements
    \begin{align*}
        a_{m, m} &= \lambda_0 (\mathbbm{1}_{[\prod_{j\neq m} s_j = 0]} + K - 1 - \sum_{k\neq m} \mathbbm{1}_{[\prod_{j\neq k} s_j = 0]}), \\
        a_{m, n} &= 2\lambda_0 (K - 2 - \sum_{k\neq m, n} \mathbbm{1}_{[\prod_{j\neq k} s_j = 0]}), 
    \end{align*}
    for $m, n\in \{1, \cdots, K\}$. 
    Additionally, $U_t = (U_1(t), \cdots, U_K(t))^\intercal$ and $U_i$'s are the local time processes at buffers $b_i$, respectively. 
\end{theorem}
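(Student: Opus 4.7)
My plan is to apply the infinitesimal-generator convergence theory of Ethier--Kurtz (Theorem 1.6.1 and Theorem 4.2.11 in \cite{ethier2009markov}) directly to the Markov chain $\hat{Q}^n$, whose generator $\mathcal{A}^n$ is read off from the rate matrices \eqref{eq:rate matrix 1}--\eqref{eq:rate matrix 2}. Concretely, I fix a test-function domain $\mathrm{D}(\mathcal{A})$ of twice continuously differentiable functions on $\prod_i [0,b_i]$ satisfying the Neumann condition $\partial_i f \equiv 0$ on each upper face $\{s_i = b_i\}$, and Taylor-expand each increment $f(s + e_i/\sqrt{n}) - f(s)$, $f(s - \sum_{j\neq i} e_j/\sqrt{n}) - f(s)$, and $f(s - e_i/\sqrt{n}) - f(s)$ to second order, substituting the heavy-traffic scaling $\lambda_i^n = \lambda_0 n + \beta_i \sqrt{n} + o(\sqrt{n})$ together with $\delta_i^n \to \delta_i$ and $b_i^n/\sqrt{n} \to b_i$ from Assumptions~1--2. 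Once this is in place, $\hat{Q}^n(0) \Rightarrow X(0)$ together with $\mathcal{A}^n f \to \mathcal{A} f$ in the sense of \eqref{eq: notion of convergence} gives $\hat{Q}^n \Rightarrow X$, provided the martingale problem for $\mathcal{A}$ is well posed --- which I would verify by recognizing $\mathcal{A}$ as the generator of the reflected SDE \eqref{eq: limiting process} with any decomposition $\Sigma \Sigma^\intercal = a$.

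The key algebraic check is at order $\sqrt{n}$: the coefficient of $\partial_m f$ combines type-$m$ arrivals that join queue $m$ (case~1 of \eqref{eq:rate matrix 1}) against type-$i$ arrivals for $i \neq m$ that trigger an instantaneous match removing an item from queue $m$ (case~2), producing
\[
\lambda_0 \sqrt{n}\,\bigl(\mathbbm{1}_{[\prod_{j\neq m} s_j = 0]} - \sum_{i\neq m}\mathbbm{1}_{[\prod_{j\neq i} s_j \neq 0]}\bigr)\partial_m f(s).
\]
A short case analysis on the state-space support $\{\prod_j s_j = 0\}$ shows that this difference vanishes whenever exactly one coordinate of $s$ is zero, which is the generic configuration off the $O(1/\sqrt{n})$ boundary layer. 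What remains at $O(1)$ is the drift $(\beta_m - \delta_m s_m)\partial_m f$ --- contributed by the centering $\lambda_i^n - \lambda_0 n$ and the abandonment rate $\delta_i^n\sqrt{n}\,s_i$ --- while the $O(1/n)$ second-derivative terms from case~2 jumps produce the state-dependent matrix $a(s)$ with the entries stated in the theorem, the cross term $a_{mn}$ arising precisely because a single case~2 jump with $i \neq m,n$ decrements coordinates $m$ and $n$ jointly. The buffer boundary is accommodated through \eqref{eq:rate matrix 2}: the ``state-unchanged'' entries at $s_j = b_j^n/\sqrt{n}$ wipe out the $O(\sqrt{n})$ contribution of $\partial_j f$, which is then consistently absorbed by the Neumann condition in $\mathrm{D}(\mathcal{A})$ and identified with Skorokhod-type reflection, yielding the nondecreasing local-time processes $U_i$ supported on $\{X_i = b_i\}$.

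The principal obstacle, I expect, is twofold. First, the diffusion matrix $a(s)$ is state-dependent and drops rank as $s$ moves between the different faces of the interface $\{\prod_j s_j = 0\}$, so well-posedness of the martingale problem requires careful treatment of face-to-face transitions and uniform control of the prelimit boundary layer where several coordinates simultaneously lie at $O(1/\sqrt{n})$. Second, the mixed configurations in \eqref{eq:rate matrix 2} --- in which some $s_i = b_i$ while others remain empty --- require that the boundary contribution decompose into $K$ distinct local-time processes $U_1,\ldots,U_K$, each charging only its own face; this I would verify by testing the convergence $\mathcal{A}^n f \to \mathcal{A} f$ against coordinate functions and invoking the oscillation estimate for the multidimensional Skorokhod map, after which the equivalence with the integral-equation formulation of \cite{xie2024multi} follows by identifying the FCLT decomposition of $\hat A_i^n - \hat R^n$ with the $\Sigma\,dW$ term above.
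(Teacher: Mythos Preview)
Your overall strategy---Ethier--Kurtz generator convergence via Taylor expansion of $\mathcal{A}^n f$---is exactly the paper's route, and your identification of the $O(1)$ drift and the state-dependent second-order matrix $a(s)$ is correct. The gap is in how you kill the $O(\sqrt{n})$ first-order term. You observe that
\[
\lambda_0\sqrt{n}\Bigl(\mathbbm{1}_{[\prod_{j\neq m} s_j = 0]} - \sum_{i\neq m}\mathbbm{1}_{[\prod_{j\neq i} s_j \neq 0]}\Bigr)
\]
vanishes when \emph{exactly one} coordinate is zero, and you flag the ``several coordinates at $O(1/\sqrt{n})$'' boundary layer as a remaining obstacle. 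But this is not a boundary-layer issue that can be absorbed by an oscillation estimate: states with two or more coordinates equal to zero lie in $E^n$, and on them the displayed coefficient equals $1$, so $|\mathcal{A}^n f - \mathcal{A} f|$ contains a term $\lambda_0\sqrt{n}\,\partial_m f$ that \emph{diverges}. Since Theorem~1.6.1 requires $\|\mathcal{A}^n f_n - \pi_n \mathcal{A} f\|_{B(E^n)}\to 0$ in supremum norm over all of $E^n$, your argument as written cannot close.

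The paper resolves this by restricting the domain $\mathrm{D}(\mathcal{A})$ further: in addition to your Neumann condition at the upper faces, one imposes the constraint $\sum_{i=1}^K \partial_i f(s)=0$ on the entire state space (equation~\eqref{eq: regulated conditions 1}). With this, the leading-order first-derivative contribution factors as
\[
\lambda_0\sqrt{n}\Bigl[\sum_i \mathbbm{1}_{[\prod_{j\neq i} s_j = 0]}\sum_{j}\partial_j f - (K-1)\sum_j \partial_j f\Bigr]=0
\]
\emph{identically}, regardless of how many coordinates are zero. This condition is not merely technical: it is the test-function encoding of the structural fact that $\prod_i X_i(t)=0$ for the limiting process, and it is precisely what your case analysis was hinting at. A second, smaller point: the paper does not test with $f$ directly but with a corrector $f_n = f + \phi/\sqrt{n}$, choosing $\phi$ so as to cancel residual $O(1)$ discrepancies that arise at the buffer faces $\{s_i = b_i^n/\sqrt{n}\}$ where the rate matrix switches from \eqref{eq:rate matrix 1} to \eqref{eq:rate matrix 2}; the Neumann condition alone handles the first-order mismatch there, but not the second-order one.
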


\begin{proof}
    The proof is divided into three parts. First, we establish an infinitesimal generator of the $n$th process $(\hat{Q}^n)$ and the limiting process. 
    Second, we employ Theorem 6.1 of Chapter 1 in \cite{ethier2009markov} to show the corresponding Feller semigroup on its proper Banach space converges under the notion of convergence \eqref{eq: notion of convergence}.
    Last but not least, we develop the weak convergence result. 
    
    \textit{Step 1}. 
    We establish the infinitesimal generator of our matching queue proposed in \eqref{eq: diffusion scaled queue length} with rate matrices \eqref{eq:rate matrix 1} and \eqref{eq:rate matrix 2}. We provided a clear picture of the construction of a matching system with perishable components in \cite{xie2024multi}. 
    It suffices to integrate the buffer capacities; namely, for each new arrival, we need to check the queue capacity to decide if it can be admitted or blocked. 

    Let the set $E^{n}$ denote all states of $(\hat{Q}^n)$, and it is given by \eqref{eq: state space}. We introduce the Banach space $B(E^n)$ of all bounded measurable functions on $E^n$ with $\|f\|_{B(E^n)} = \sup_{x\in E^n} |f(x)|$. 
    Let $B(\mathbb{R})$ represent the Banach space of all bounded Borel measurable functions on $\mathbb{R}$ equipped with $\|f\|_{B(\mathbb{R})} = \sup_{x\in\mathbb{R}}|f(x)|$. 
    Moreover, let $\hat{C}(\mathbb{R})\subseteq B(\mathbb{R})$ denote the space of all continuous functions that have finite limits at infinity and let $\hat{C}_c^2(\mathbb{R})$ denote the space of all $f\in\hat{C}(\mathbb{R})$ with compact support that is twice continuously differentiable. 
    Since the process $(\hat{Q}^n)$ is a $K$-dimensional Markov chain on $E^n$ with rate matrices \eqref{eq:rate matrix 1} and \eqref{eq:rate matrix 2}, the generator of the $n$th system is given by 
    \begin{equation}\label{eq: generator 1}
        \begin{aligned}
            &A_n f_n(s_1, \cdots, s_K) \\
            &= 
            \sum_{i=1}^K \lambda_i^n \bigg[\left(f_n\left(s_1, \cdots, s_i + \frac{1}{\sqrt{n}}, \cdots, s_K\right) - f_n(s_1, \cdots, s_K)\right) \mathbbm{1}_{[\prod_{j\neq i} s_j = 0]}\\
            &\quad\quad + \left(f_n\left(s_1 - \frac{1}{\sqrt{n}}, \cdots, s_i, \cdots, s_K- \frac{1}{\sqrt{n}}\right) - f_n(s_1, \cdots, s_K)\right) \mathbbm{1}_{[\prod_{j\neq i} s_j \neq 0]}
            \bigg]\\
            & \quad+ \sum_{i=1}^K \delta_i^n\sqrt{n} s_i \left[f_n\left(s_1, \cdots, s_i - \frac{1}{\sqrt{n}}, \cdots, s_K\right) - f_n(s_1, \cdots, s_K)
            \right],
        \end{aligned}
    \end{equation}
    if $s_i < b_i^n/\sqrt{n}$ for all $i$ and $\prod_{k=1}^K s_k = 0$. 
    If $s_i = b_i^n/\sqrt{n}$ for some $i$ and $\prod_{k\neq i} s_k = 0$, we have
    \begin{equation}\label{eq: generator 2}
        \begin{aligned}
            &A_n f_n\left(s_1, \cdots, s_i = \frac{b_i^n}{\sqrt{n}}, \cdots, s_K\right) \\
            &= 
            \sum_{j\neq i} \lambda_j^n \bigg[\left(f_n\left(s_1, \cdots, s_j + \frac{1}{\sqrt{n}}, \cdots, s_K\right) - f_n(s_1, \cdots, s_K)\right) \mathbbm{1}_{[\prod_{k\neq i,j} s_k = 0]}
            \mathbbm{1}_{[s_j < b_j^n/\sqrt{n}]}\\
            &\quad\quad + \left(f_n\left(s_1 - \frac{1}{\sqrt{n}}, \cdots, s_j, \cdots, s_K- \frac{1}{\sqrt{n}}\right) - f_n(s_1, \cdots, s_K)\right) \mathbbm{1}_{[\prod_{k\neq i,j} s_k \neq 0]}
            \bigg]\\
            & \quad+ \sum_{j=1}^K \delta_j^n\sqrt{n} s_j \left[f_n\left(s_1, \cdots, s_j - \frac{1}{\sqrt{n}}, \cdots, s_K\right) - f_n(s_1, \cdots, s_K)
            \right]. 
        \end{aligned}
    \end{equation}
    The domain of the generator $A_n$, $D(A_n)$ is $B(E^n)$, which contains all the bounded measurable functions on $E^n$. 
    By Theorem 1.6 of Chapter 8 in \cite{ethier2009markov}, one can easily justify the positive-definite condition, which guarantees that this generator yields a Feller semigroup $T_n(t)$ on $\hat{C}(\mathbb{R}^K)$ (also, see Remark \ref{rm: matrix a}). 
    Since the state process is a regulated stochastic process, some boundary conditions need to be determined to characterize the reflection at the buffers. We will exhibit this in later steps. 

    Following the same line of thought, the generator of the diffusion limit process \eqref{eq: limiting process} is given by
    \begin{equation}\label{eq: generator limiting process}
        \begin{aligned}
            A f\left(s_1, \cdots, s_K\right) 
            &= \frac{\lambda_0}{2} \sum_{i=1}^K \mathbbm{1}_{[\prod_{j\neq i} s_j = 0]} \frac{\partial^2 f}{\partial s_i^2}
            -\sum_{i=1}^K \beta_i \sum_{j\neq i} \frac{\partial f}{\partial s_j} 
            -\sum_{i=1}^K \delta_i s_i \frac{\partial f}{\partial s_i}\\
            &\quad+ \frac{\lambda_0}{2} \sum_{i=1}^K \sum_{j\neq i} \sum_{k\neq i} \frac{\partial^2 f}{\partial s_j \partial s_k} (1 - \mathbbm{1}_{[\prod_{j\neq i} s_j = 0]}) \\
            &= \frac{1}{2} \sum_{m, n = 1}^K a_{m, n} \frac{\partial^2f}{\partial s_m \partial s_n} + \sum_{m=1}^K (\beta_m - \delta_m s_m)  \frac{\partial f}{\partial s_m}. 
        \end{aligned}
    \end{equation}
    where $f\in D(A):=\{f\in \hat{C}^2(\mathbb{R}^K): f'\text{ is bounded}\}$. 
    We further assume the regulated conditions 
    \begin{equation}\label{eq: regulated conditions 1}
        \sum_{i=1}^K \frac{\partial f}{\partial s_i}(s_1, \cdots, s_K) = 0, 
    \end{equation}
    for all $(s_1, \cdots, s_K)\in E^n$, and at the boundaries
    \begin{equation}\label{eq: regulated conditions 2}
        \frac{\partial f}{\partial s_i}(s_1, \cdots, s_K) = 0
    \end{equation}
    if $s_j = b_j^n/\sqrt{n}$ for some $j\in \{1, \cdots, K\}$. 
    Here, the differential operator $A$ is the infinitesimal generator of a stochastic differential equation \eqref{eq: limiting process}. 
    Observe that the last equality is a more compact form of the generator utilizing the scalar matrices, and the second term $\sum_{j\neq i} \frac{\partial f}{\partial s_j} = -\frac{\partial f}{\partial s_i}$ is obtained by the regulated condition \eqref{eq: regulated conditions 1}.

     \textit{Step 2}. 
     We intend to apply Theorem 6.1 of Chapter 1 in \cite{ethier2009markov}, where it suffices to show that for each $f\in \hat{C}_c^2(\mathbb{R}^K)$, there exists $f_n\in D(A_n)$ such that $f_n\to f$ and $A_nf_n \to Af$ under the notion of convergence \eqref{eq: notion of convergence}. 
     To this end, we pick $f_n = f + \phi/\sqrt{n}$, where $\phi:\mathbb{R}^K \to \mathbb{R}$ and $\phi \in C(\mathbb{R}^K)$. Let $\pi_n$ be the identity transformation. It is natural to show that $\|f_n - \pi_n f\|_{B(E^n)}\to 0$ as $n$ goes to infinity since $f\in \hat{C}_c^2(\mathbb{R}^K)$. 

     Now, we are left to show $A_n f_n \to Af$ as $n\to\infty$. 
     First, we address the case that none of the states reach the buffers, namely $s_i < b_i^n/\sqrt{n}$ for all $i\in\{1, \cdots, K\}$. 
     Since the definition of $f_n$ and $f\in \hat{C}_c^2(\mathbb{R}^K)$, the Taylor's expansion suggests
     \begin{equation}\label{eq: Taylor}
     \begin{aligned}
        &\quad f\left(s_1, \cdots, s_i + \frac{1}{\sqrt{n}}, \cdots, s_K\right) \\
        &= f(s_1, \cdots, s_K) + \frac{1}{\sqrt{n}}\frac{\partial f}{\partial s_i} + \frac{1}{2} \left(\frac{1}{\sqrt{n}}\right)^2 \frac{\partial^2 f}{\partial s_i^2} + o\left(\frac{1}{n}\right), \\
        &\quad f\left(s_1 - \frac{1}{\sqrt{n}}, \cdots, s_i, \cdots, s_K - \frac{1}{\sqrt{n}}\right)\\
        &= f(s_1, \cdots, s_K) - \frac{1}{\sqrt{n}}\sum_{j\neq i} \frac{\partial f}{\partial s_j} 
        + \frac{1}{2!} \frac{1}{n} \sum_{j\neq i}\sum_{k\neq i} \frac{\partial^2 f}{\partial s_j \partial s_k} + o\left(\frac{1}{n}\right). 
    \end{aligned}
    \end{equation}
    Using proper Taylor's expansions \eqref{eq: Taylor}, a simple algebraic manipulation suggests that for $s_i < b_i^n/\sqrt{n}$, 
    \begin{equation}\label{eq: generator 1 with Taylor}
        \begin{aligned}
            &\quad A_nf_n(s_1, \cdots, s_K) \\
            &= \frac{1}{2} \sum_{i=1}^K \sum_{j\neq i} \sum_{k\neq i} \frac{\partial^2 f}{\partial s_j \partial s_k} \frac{\lambda_i^n}{n} 
            + \frac{1}{2}\sum_{i=1}^K \frac{\delta_i^n s_i}{\sqrt{n}} \frac{\partial^2 f}{\partial s_i^2} \\
            &\quad + \sum_{i=1}^K \mathbbm{1}_{[\prod_{j\neq i} s_j = 0]} \left(\frac{1}{2} \frac{\partial^2 f}{\partial s_i^2} \frac{\lambda_i^n}{n} - \frac{1}{2} \sum_{j\neq i}\sum_{k\neq i} \frac{\partial^2 f}{\partial s_j \partial s_k} \frac{\lambda_i^n}{n}\right) \\
            &\quad + \sum_{i=1}^K \sum_{j\neq i} \frac{\partial f}{\partial s_j} \left( - \frac{\lambda_i^n - \lambda_0 n}{\sqrt{n}}\right)
            + \sum_{i=1}^K \mathbbm{1}_{[\prod_{j\neq i} s_j = 0]}\frac{\partial f}{\partial s_i} \frac{\lambda_i^n - \lambda_0 n}{\sqrt{n}}\\
            &\quad + \sum_{i=1}^K \mathbbm{1}_{[\prod_{j\neq i} s_j = 0]} \sum_{j\neq i} \frac{\partial f}{\partial s_j} \frac{\lambda_i^n - \lambda_0 n}{\sqrt{n}} 
            -\sum_{i=1}^K \delta_i^n s_i \frac{\partial f}{\partial s_i}
            + \frac{1}{\sqrt{n}}A_n \phi, 
        \end{aligned}
    \end{equation}
    where $A_n\phi$ is defined as the same way as \eqref{eq: generator 1 with Taylor} with $f$ substituted by $\phi$ and for simplicity, we omit its details. 
    Note that the $\delta_i^n/\sqrt{n}$ term is negligible since $\lim_{n\to\infty} \delta_i^n = \delta_i < \infty$. 
    
    Using \eqref{eq: generator limiting process} and \eqref{eq: generator 1 with Taylor}, we further obtain that for $f\in \hat{C}_c^2(\mathbb{R}^K)$ and $s_i < b_i^n / \sqrt{n}$ for all $i\in\{1, \cdots, K\}$, 
    \begin{align*}
        &\quad \left|A_nf_n(s_1, \cdots, s_K) - (\pi_nAf)(s_1, \cdots, s_K)\right| \\
        & \leq 
        \sum_{i=1}^K \sum_{j\neq i} \sum_{k\neq i} \left| \frac{1}{2} \frac{\partial^2 f}{\partial s_j \partial s_k} (1 - \mathbbm{1}_{[\prod_{j\neq i} s_j = 0]}) \left( \frac{\lambda_i^n}{n} - \lambda_0 \right) \right| \\
        &\quad + \frac{1}{2} \sum_{i=1}^K \mathbbm{1}_{[\prod_{j\neq i} s_j = 0]} \left| \frac{\partial^2 f}{\partial s_i^2} \left(\frac{\lambda_i^n}{n} - \lambda_0\right) \right| \\
        & \quad + \sum_{i=1}^K \left|\sum_{j\neq i} \frac{\partial f}{\partial s_j} \left( - \frac{\lambda_i^n - \lambda_0 n}{\sqrt{n}} + \beta_i\right)\right|
        + \sum_{i=1}^K |\delta_i^n - \delta_i| s_i \frac{\partial f}{\partial s_i}
        \\
        & \quad + \sum_{i=1}^K \mathbbm{1}_{[\prod_{j\neq i} s_j = 0]} 
        \left| \sum_{j\neq i} \frac{\partial f}{\partial s_j} + \frac{\partial f}{\partial s_i}\right|
        \left|\frac{\lambda_i^n - \lambda_0 n}{\sqrt{n}} \right|
        + \frac{1}{\sqrt{n}} |A_n \phi|. 
    \end{align*}
    Observe that the summation $\sum_{j\neq i} \frac{\partial f}{\partial s_j} + \frac{\partial f}{\partial s_i} = \sum_{j = 1}^K \frac{\partial f}{\partial s_j}$, which turns out to be zero due to the regulated condition \eqref{eq: regulated conditions 1}. 
    Hence, taking supremum over $E^n$ and as $n\to\infty$, we have for each $f\in \hat{C}_c^2(\mathbb{R}^K)$ and $s_i < b_i^n / \sqrt{n}$, 
    \[
     \|A_n f_n - \pi_n Af\|_{B(E^n)} = \sup_{(s_1, \cdots, s_K)\in E^n} \left|(A_nf_n - \pi_nAf)(s_1, \cdots, s_K)\right| \to 0. 
    \]

    Second, we address the case that at least one queue reaches their buffers, namely, if $s_i = b_i^n/\sqrt{n}$ for some $i$'s and $\prod_{k\neq i}s_k = 0$, namely, the $i$th queue reaches the buffer and at least one empty queue for the rest of the queues, Taylor's expansions yield 
    \begin{equation}\label{eq: generator 2 with Taylor}
        \begin{aligned}
            &\quad A_nf_n\left(s_1, \cdots, s_i = \frac{b_i^n}{\sqrt{n}}, \cdots, s_K\right) \\
            &= \frac{1}{2} \sum_{j\neq i} \mathbbm{1}_{[\prod_{k\neq i, j} s_k = 0]} 
            \mathbbm{1}_{[s_j < b_j^n/\sqrt{n}]}
            \frac{\partial^2 f}{\partial s_j^2} \frac{\lambda_j^n}{n} \\
            &+ \frac{1}{2}\sum_{j\neq i} \frac{\lambda_j^n}{n} \sum_{l\neq j}\sum_{k\neq j} \frac{\partial^2 f}{\partial s_l \partial s_k} 
            - \frac{1}{2} \sum_{j\neq i} \mathbbm{1}_{[\prod_{k\neq i, j} s_k = 0]} \frac{\lambda_j^n}{n} \sum_{l\neq j}\sum_{k\neq j} \frac{\partial^2 f}{\partial s_l \partial s_k}\\
            &+ \frac{1}{2}\sum_j \frac{\delta_j^n s_j}{\sqrt{n}} \frac{\partial^2 f}{\partial s_j^2}
            - \sum_j \delta_j^n s_j \frac{\partial f}{\partial s_j}
            + \sum_{j\neq i} 
            \mathbbm{1}_{[\prod_{k\neq i, j} s_k = 0]} 
            \mathbbm{1}_{[s_j < b_j^n/\sqrt{n}]}
            \frac{\partial f}{\partial s_j} \frac{\lambda_j^n - \lambda_0 n}{\sqrt{n}} \\
            &- \sum_{j\neq i} \frac{\lambda_j^n - \lambda_0 n}{\sqrt{n}} \sum_{l\neq j} \frac{\partial f}{\partial s_l} 
            + \sum_{j\neq i} \mathbbm{1}_{[\prod_{k\neq i, j} s_k = 0]} \frac{\lambda_j^n - \lambda_0 n}{\sqrt{n}} \sum_{l\neq j} \frac{\partial f}{\partial s_l}
            + \frac{1}{\sqrt{n}}A_n \phi, 
        \end{aligned}
    \end{equation}
    where $A_n \phi$ is defined the same way as \eqref{eq: generator 2 with Taylor} with $f$ substituted by $\phi$. Additionally, we adopt the boundary conditions $\frac{\partial f}{\partial s_j}(s_1, \cdots, s_i = b_i^n/\sqrt{n}, \cdots, s_K) = 0$ for all $j$ and some $i$ as in \eqref{eq: regulated conditions 2}, namely, partial derivatives vanish if at least one state reaches the capacity. 

    Now, \eqref{eq: generator limiting process} and \eqref{eq: generator 2 with Taylor} suggest that when $s_i = b_i^n / \sqrt{n}$ for some $i$'s, 
    \begin{align*}
        &\quad \left|A_nf_n\left(s_1, \cdots, s_i = \frac{b_i^n}{\sqrt{n}}, \cdots, s_K\right) - (\pi_nAf)\left(s_1, \cdots, s_i = \frac{b_i^n}{\sqrt{n}}, \cdots, s_K\right)\right| \\
        & \leq \frac{1}{2} \sum_{j\neq i}  \mathbbm{1}_{[\prod_{k\neq i, j} s_k = 0]}  \mathbbm{1}_{[s_j < b_j^n/\sqrt{n}]}\left|\frac{\partial^2 f}{\partial s_j^2} \left(\frac{\lambda_j^n}{n} - \lambda_0\right)\right| \\
        &\quad +\frac{1}{2} \sum_{j\neq i} \sum_{l\neq j}\sum_{k\neq j} \left|\frac{\lambda_j^n}{n} - \lambda_0\right| 
        \left|\frac{\partial^2 f}{\partial s_l \partial s_k} \left(1 - \mathbbm{1}_{[\prod_{k\neq i, j} s_k = 0]}\right)\right| \\
        &\quad + \bigg| 
        \frac{\lambda_0}{2} \sum_{k\neq i}\sum_{l\neq i}  \frac{\partial^2 f}{\partial s_k \partial s_l} \left( 1 - \mathbbm{1}_{[\prod_{k\neq i} s_k = 0]}\right) 
        + \frac{\lambda_0}{2} \sum_{j=1}^K \mathbbm{1}_{[\prod_{k\neq i, j} s_k = 0]} \mathbbm{1}_{[s_j < b_j^n/\sqrt{n}]} \frac{\partial^2f}{\partial s_j^2} \\
        &\quad\quad - \frac{1}{2\sqrt{n}} \sum_{j\neq i} \mathbbm{1}_{[\prod_{k\neq i, j} s_k = 0]} \mathbbm{1}_{[s_j<b_j^m/\sqrt{n}]} \frac{\partial^2 \phi}{\partial s_j^2} \frac{\lambda_j^n}{n} \\
        &\quad \quad - \frac{1}{2\sqrt{n}} \sum_{j\neq i} \frac{\lambda_j^n}{n} \sum_{l\neq j}\sum_{k\neq j} \frac{\partial^2 \phi}{\partial s_l \partial s_k} \left(1 - \mathbbm{1}_{[\prod_{k\neq i, j} s_k = 0]}\right)
        \bigg|. 
    \end{align*}
    Here we selection $\phi\in C(\mathbb{R}^K)$ such that the last absolute value term vanishes to compensate the continuity of the generator at the boundaries (see \cite{xie2022topics} and \cite{xie2024multi}), which we called the compensation condition. Additionally, $\phi$ satisfies the regulated conditions \eqref{eq: regulated conditions 1} and \eqref{eq: regulated conditions 2}. 
    Therefore, taking supremum over $E^n$ and as $n\to\infty$, we obtain the convergence of the generator at the boundaries under the notion of convergence \eqref{eq: notion of convergence} for each $f\in \hat{C}_c^2(\mathbb{R}^K)$ and $s_i = b_i^n / \sqrt{n}$ for some $i$'s. 
    Since the supremum is taken over $E^n \cap \text{supp}(f)$, the convergence $A_n f_n \to Af$ as $n\to\infty$ follows under the notion of convergence \eqref{eq: notion of convergence}.

     \textit{Step 3}. 
     We have shown that for each $f\in \hat{C}_c^2(\mathbb{R}^K)$ there exists $f_n = f+ \phi/\sqrt{n}$, where $\phi$ satisfies the compensation condition and regulated conditions, such that $f_n \to f$ and $A_n f_n \to Af$ under the notion of convergence \eqref{eq: notion of convergence}. Therefore, Theorem 6.1 of Chapter 1 in \cite{ethier2009markov} suggests that for each $f\in B(\mathbb{R}^K)$, $T_n(t) \pi_nf \to T(t) f$ under the notion of convergence for all $t\geq 0$, where $\{T_n(t)\}$ is a Feller semigroup on $\hat{C}(\mathbb{R}^K)$ generated by \eqref{eq: generator 1} and $\{T(t)\}$ is a Feller semigroup on $\hat{C}(\mathbb{R}^K)$ generated by \eqref{eq: generator limiting process}. Moreover, the map $\pi_n$ is the identify map. 
     Therefore, Theorem 2.11 of Chapter 4 in \cite{ethier2009markov} further yields that there exists a Markov process $X$ corresponding to $\{T(t)\}$ with initial distribution $X(0)\in \mathbb{R}^K$ such that $\hat{Q}^n \Rightarrow X$ as desired. 

\end{proof}

\begin{remark}\label{rm: matrix a}
    The Feller semigroup of the generator $A$ in Step 3 is ensured by Theorem 1.6 and Theorem 2.5 in Chapter 8 of \cite{ethier2009markov}, where it suffices to either justify that the coefficient matrix $a$ satisfies $\inf_{|\theta| = 1} \theta\cdot a\theta > 0$ or the boundedness of partial derivatives of the coefficients. 
    To simplify the exposition and without loss of generality, we may consider the system of the case $K=3$ without abandonment and buffers, where its generator \eqref{eq: generator 1} and \eqref{eq: generator 2} can be significantly simplified. Moreover, this instance provides a clear picture of the structures of the generators and the connections of the scaled state process and its limiting process. 
    In this case, the matrix $a(s_1, s_2, s_3)=[a_{ij}]$ has elements: 
    \begin{align*}
        a_{11} &= \lambda_0 (2 + \mathbbm{1}_{[s_2s_3 = 0]} - \mathbbm{1}_{[s_1s_3 = 0]} - \mathbbm{1}_{[s_1s_3=0]}), \\
        a_{22} &=  \lambda_0(2 + \mathbbm{1}_{[s_1s_3= 0]} - \mathbbm{1}_{[s_2s_3 = 0]} - \mathbbm{1}_{[s_1s_2 = 0]}), \\
        a_{33} & = \lambda_0(2 + \mathbbm{1}_{[s_1s_2 = 0]} - \mathbbm{1}_{[s_2s_3 = 0]} - \mathbbm{1}_{[s_1s_3 = 0]}), \\
        a_{12} &\equiv a_{21} = 2\lambda_0 (1 - \mathbbm{1}_{[s_1s_2 = 0]}), \\
        a_{13} &\equiv a_{31} = 2\lambda_0 (1 - \mathbbm{1}_{[s_1s_3 = 0]}), \\
        a_{23} &\equiv a_{32} = 2\lambda_0 (1 - \mathbbm{1}_{[s_2s_3 = 0]}).
    \end{align*}
    
    For nondegenerate diffusion cases, the matrix $a$ becomes an identity matrix, which is a positive-definite matrix such that $\inf_{|\theta| = 1} \theta\cdot a\theta > 0$ holds. This occurs when at least two queues are empty, for instance, $s_1 = s_2 = 0$ and $s_3\neq 0$. 
    For degenerate diffusion cases, we may reduce the dimensionality of the matrix $a$. For example, consider the cases of exactly one empty queue and $\{(s_1, s_2, s_3) \ | \ s_1 = 0,\ s_2\neq 0, \ s_3 \neq 0\} \subseteq E^n$. The generator \eqref{eq: generator limiting process} becomes 
    \[
    Af(s) = \frac{1}{2} \sum_{i, j =1}^3 a_{ij} \partial_i \partial_j f + \sum_{i=1}^3 b_i \partial_i f,
    \]
    where the diffusion and drift matrices are 
    \[
    a(s) = \begin{bmatrix}
        0 & 0 & 0 \\
        0 & 2\lambda_0 & 2\lambda_0 \\
        0 & 2\lambda_0 & 2\lambda_0
    \end{bmatrix}, \
    b(s) = \begin{bmatrix}
        b_1(s)\\
        b_2(s)\\
        b_3(s)
    \end{bmatrix}
    = \begin{bmatrix}
        \beta_1 - \delta_1 s_1 \\
        \beta_2 - \delta_2 s_2 \\
        \beta_3 - \delta_3 s_3
    \end{bmatrix}. 
    \]
    By the boundary condition \eqref{eq: regulated conditions 1}, we observe that $\partial_1 f = - \partial_2 f - \partial_3 f$, which further deduce 
    \[
    Af(s) = \frac{1}{2} \sum_{i, j = 1}^2 \Tilde{a}_{ij} \partial_i \partial_j f + \sum_{i=1}^2 \Tilde{b}_i \partial_i f, 
    \]
    where $\Tilde{a}(s)$ is a submatrix of $a(s)$ with nonzero elements and $\Tilde{b}(s) = [b_2 - b_1, b_3 - b_1]^\intercal$. It is straightforward to verify the conditions in Theorem 8.2.5 in \cite{ethier2009markov}. 
\end{remark}

To better understand the limiting process \eqref{eq: limiting process}, we examine the case $K=2$, which corresponds to a double-ended queue.

\begin{example}\label{ex1}
    In Theorem \ref{thm: weak convergence} and when the number of categories is $K = 2$, the heavy-traffic limiting process \eqref{eq: limiting process} admits the following matrix form: 
    \begin{equation}\label{eq: ex1 limiting process}
        \begin{bmatrix}
            X_1(t) \\
            X_2(t)
        \end{bmatrix} 
        = \begin{bmatrix}
            x_1\\
            x_2
        \end{bmatrix}
         + \Sigma W_t + \begin{bmatrix}
             \int_0^t (\beta_1 - \delta_1 X_1(s))\, ds\\
             \int_0^t (\beta_2 - \delta_2 x_2(s))\, ds
         \end{bmatrix}
          - \begin{bmatrix}
              U_1(t)\\
              U_2(t)
          \end{bmatrix}. 
    \end{equation}
    If we consider the difference of two elements, it is straightforward to deduce a double-ended queue for $t\geq 0$,
    \begin{equation}\label{eq: ex1 double-ended queue}
        X(t) = x + \sigma B_t + \int_0^t (\beta - h(X(s)))\, ds - U_t, 
    \end{equation}
    where $X(t) = X_1(t) - X_2(t)$, $\sigma B_t$ and $[\Sigma W_t]_1 - [\Sigma W_t]_2$ are identically distributed, $\beta = \beta_1 - \beta_2$, $h(x) = \delta_1 x_1 - \delta_2 x_2$, and $U_t = U_1(t) - U_2(t)$. 
    Note that since $x = x_1 - x_2$ and the fact that at least one queue is empty at any given time instant, we may express $x_1 = x^+$ and $x_2 = x^-$, which further deduce $h(x) = \delta_1 x^+ - \delta_2 x^-$. 
    Observe that this coincides with the heavy-traffic limit obtained in \cite{liu2021admission}, where their Assumption 2.5 accommodates constant buffers and Definition A.1 ensures the existence of two-sided Skorokhod maps (also, see \cite{kruk2007explicit} and \cite{burdzy2009skorokhod}). 
\end{example}

Some comments on Theorem \ref{thm: weak convergence} are in order. 
First, the limiting diffusion process in Theorem \ref{thm: weak convergence} is more intricate than it may initially appear. Due to the structure of instantaneous matching, we know that at any given time $t>0$, at least one queue must be empty, ensuring that $\prod_{i=1}^K X_i(t) = 0$ for all $t\geq 0$. Otherwise, matches would occur, contradicting the persistence of this situation. 
This further establishes a coupling behavior of the state processes that elements are mutually coupled (cf. \cite{xie2024multi}). 
In the previous Example \ref{ex1}, the coupling behavior is preserved in the limiting equation \eqref{eq: ex1 double-ended queue}. However, the condition that at least one queue remains empty at all times is not directly apparent from the limiting process described in \eqref{eq: ex1 limiting process}. We suspect that a specific decomposition $\Sigma$ of the diffusion matrix $a$ is necessary to characterize and reflect this structural requirement.

Second, the local time process $U_t$ reflects the impact of finite buffer capacities. It activates when a queue hits its buffer limit, introducing reflection. However, a key open question remains: whether the limiting diffusion process described in \eqref{eq: limiting process} is distributionally equivalent to the heavy traffic limit derived in \cite{xie2024multi}, which is expected to be characterized using a regulated coupling stochastic integral equation (cf. \cite{xie2024multi} and \cite{xiewucontrol}):  
\begin{equation}
        Q(t) = Q(0) + \beta t + \sigma W(t) - \int_0^t \delta \diamond Q(s)ds - R(t) I - L(t), 
        \label{eq: heavy traffic limit with finite capacity}
\end{equation}
where $\beta = (\beta_1, \cdots, \beta_K)^\intercal$, $\sigma = \text{diag}(\sigma_1, \cdots, \sigma_K)$, $\delta = (\delta_1, \cdots, \delta_K)^\intercal$, $I = (1, \cdots, 1)^\intercal$, the product $\diamond$ is a Hadamard entrywise product, and the matching completion process
\[
R(t) = \min_{1\leq i \leq K}\{Q_j(0) + \beta_j t + \sigma_j W_j(t) - \int_0^t \delta_j X_j(s)ds -L_j(t)\},
\]
and $L = (L_1, \cdots, L_K)^\intercal$ for each $i\in\{1, \cdots, K\}$ that $\int_0^{t} \mathbbm{1}_{[X_i(s)<b_i]}(s) dL_i(s) = 0$. 
Note that a Skorokhod mapping could be established to account for the regulation in \eqref{eq: heavy traffic limit with finite capacity}. However, it reveals a non-expensive map that cannot ensure a contraction mapping. 

Additionally, if we consider the difference of any two elements in \eqref{eq: heavy traffic limit with finite capacity}, we can deduce that for $i\neq j$
\begin{align*}
    Q_i(t) - Q_j(t) &= Q_i(0) - Q_j(0) + \int_0^t ((\beta_i - \delta_i Q_i(s)) - (\beta_j - \delta_j Q_j(s)))\, ds \\
    &\quad + \sigma_i W_i(t) - \sigma_j W_j(t) - (L_i - L_j)(t), 
\end{align*}
which is identical with the difference of any two elements of \eqref{eq: limiting process}. 
Intuitively, the process $R(t)$ can be interpreted as a drifted Brownian motion combined with a scaled state-dependent term. Notably, the associated local time process remains inactive whenever the corresponding state hits the origin. This observation implies that $R(\cdot)$ admits a semimartingale decomposition, which helps reveal the structural form of the limiting process in \eqref{eq: limiting process}, in relation to the regulated system dynamics in \eqref{eq: heavy traffic limit with finite capacity}. 
One can further distinguish between long queues, which actively contribute to matching, and short queues, which tend to remain empty. In particular, when a queue stays empty, its corresponding local time process is not triggered. As a result, the matching completion process $R(\cdot)$ simplifies to a drifted Brownian motion plus a scaled state-dependent component. We refer to \cite{xie2024multi} for an example of a detailed development of such decomposition. However, a semimartingale decomposition of \eqref{eq: heavy traffic limit with finite capacity} is unknown.

Third, an appropriate decomposition of the diffusion matrix $a = \Sigma \Sigma^\intercal$ is uncertain. As noted in Remark \ref{rm: matrix a}, in certain special cases, the matrix $a$ may fail to be positive definite, in which case a Cholesky decomposition does not exist. 
It is also straightforward to see that the matrix $a$ is positive semi-definite that ensures real-valued non-negative eigenvalues, which further provides an eigenvalue decomposition. 
As mentioned in our initial discussion, it is necessary to address the structural requirement, which may be inferred from a specific matrix decomposition.

Despite these uncertainties, the limiting diffusion should capture two essential features:
(i) When a queue reaches zero, it may remain at zero for a nontrivial duration, depending on the availability of matching components in other queues.
(ii) When a queue reaches its buffer capacity, a reflective behavior must occur.
To formalize these behaviors, it is necessary to establish that the matching completion process defined in \cite{xie2024multi} admits a semimartingale representation consistent with \eqref{eq: limiting process}, involving nontrivial local time terms. 
In \cite{xie2024multi}, a semimartingale decomposition was derived for the special case without abandonment or reflection. However, the equivalence in distribution with the general limiting process remains unresolved. 
In our follow-up work \cite{xiewucontrol}, we will develop and address the novel regulated coupling stochastic integral equation \eqref{eq: heavy traffic limit with finite capacity} in great detail.

 \bibliographystyle{elsarticle-num} 
 \bibliography{cas-refs}





\end{document}